\DeclareMathOperator{\Cl}{Cl}
\newtheorem{theorem}{Theorem}[section]
\newtheorem{lemma}[theorem]{Lemma}
\newtheorem{proposition}[theorem]{Proposition}
\newtheorem{remark}[theorem]{Remark}
\newtheorem{example}[theorem]{Example}
\newtheorem{corollary}[theorem]{Corollary}
\title{The operator of relative complementation}
\author{Ivan~Chajda and Helmut~L\"anger}
\date{}
\begin{document}
	
\footnotetext{Support of the research of the first author by the Czech Science Foundation (GA\v CR), project 24-14386L, entitled ``Representation of algebraic semantics for substructural logics'', and by IGA, project P\v rF~2024~011, is gratefully acknowledged.}

\maketitle
	
\begin{abstract}
By the operator of relative complementation is meant a mapping assigning to every element $x$ of an interval $[a,b]$ of a lattice $\mathbf L$ the set $x^{ab}$ of all relative complements of $x$ in $[a,b]$. Of course, if $\mathbf L$ is relatively complemented then $x^{ab}$ is non-empty for each interval $[a,b]$ and every element $x$ belonging to it. We study the question under what condition a complement of $x$ in $\mathbf L$ induces a relative complement of $x$ in $[a,b]$ It is well-known that this is the case provided $\mathbf L$ is modular and complemented. However, we present a more general result. Further, we investigate properties of the operator of relative complementation, in particular in the case when the interval $[a,b]$ is a modular sublattice of $\mathbf L$ or if it is finite. Moreover, we characterize when the operator of relative complementation is involutive and we show a class of lattices where this identity holds. Finally, we establish sufficient conditions under which two different complements of a given element $x$ of $[a,b]$ induce the same relative complement of $x$ in this interval.
\end{abstract}
	
{\bf AMS Subject Classification:} 06C20, 06C15, 06C05, 06A15
	
{\bf Keywords:} Complemented lattice, relative complement, modular lattice, operator of relative complementation

\section{Introduction}

Let $\mathbf L=(L,\vee,\wedge,0,1)$ be a bounded lattice and $a\in L$. The element $b$ of $L$ is called a {\em complement} of $a$ if $a\vee b=1$ and $a\wedge b=0$. The lattice $\mathbf L$ is called {\em complemented} if every of its elements has at least one complement. It is well known (cf.\ \cite B) that if $\mathbf L$ is distributive then every element of $L$ has at most one complement. For more information on complemented lattices the reader is referred to \cite D. Let $\mathbf L=(L,\vee,\wedge)$ be a lattice, $a,b\in L$ with $a\le b$ and $c\in[a,b]$. An element $d$ of $[a,b]$ is called a {\em relative complement} of $c$ in $[a,b]$ if $c\vee d=b$ and $c\wedge d=a$. The lattice $\mathbf L$ is called {\em relatively complemented} if for every $a,b\in L$ with $a\le b$ every element of $[a,b]$ has at least one relative complement in $[a,b]$. It is well known that if $\mathbf L$ is a complemented modular lattice, $a,b\in L$ with $a\le b$, $c\in[a,b]$ and $d$ is a complement of $c$ in $L$ then $(d\vee a)\wedge b=(d\wedge b)\vee a$ is a relative complement of $c$ in $[a,b]$. The question is if all relative complements of $c$ in $[a,b]$ arise in this way and under what conditions this method works also in the non-modular case.

In our previous paper \cite{CL} we introduced the operator $^+$ on a bounded lattice $\mathbf L=(L,\vee,\wedge,0,1)$ as follows:
\begin{align*}
x^+ & :=\{y\in L\mid x\vee y=1\text{ and }x\wedge y=0\}, \\
A^+ & :=\{y\in L\mid x\vee y=1\text{ and }x\wedge y=0\text{ for all }x\in A\}
\end{align*}
for all $x\in L$ and all subsets $A$ of $L$. Of course, $\mathbf L$ is complemented if and only if $x^+\ne\emptyset$ for all $x\in L$.

Now we introduce an analogous operator for relative complements. Let $\mathbf L=(L,\vee,\wedge)$ be a lattice and $a,b\in L$ with $a\le b$. We define
\begin{align*}
x^{ab} & :=\{y\in[a,b]\mid x\vee y=b\text{ and }x\wedge y=a\}, \\
A^{ab} & :=\{y\in[a,b]\mid x\vee y=b\text{ and }x\wedge y=a\text{ for all }x\in A\}
\end{align*}
for all $x\in[a,b]$ and all subsets $A$ of $[a,b]$. Of course, $\mathbf L$ is relatively complemented if and only if $x^{ab}\ne\emptyset$ for all $a,b\in L$ with $a\le b$ and all $x\in[a,b]$.

Throughout the paper we often identify singletons with their unique element.

\section{Complemented lattices}

Let $\mathbf L=(L,\vee,\wedge,0,1)$ be a complemented lattice and $a,b\in L$ with $a\le b$. Put
\begin{align*}
 \overline x_{ab} & :=(x^+\vee a)\wedge b, \\
\widehat x_{ab} & :=(x^+\wedge b)\vee a
\end{align*}
for all $x\in L$ where for any subsets $A,B$ of $L$
\begin{align*}
  A\vee B & :=\{x\vee y\mid x\in A,y\in B\}, \\
A\wedge B & :=\{x\wedge y\mid x\in A,y\in B\}.
\end{align*}

The following result is well known.

\begin{lemma}
If $\mathbf L=(L,\vee,\wedge,0,1)$ is a modular complemented lattice and $a,b\in L$ with $a\le b$ then $\overline x_{ab}=\widehat x_{ab}\subseteq x^{ab}$ for all $x\in[a,b]$ and hence $\mathbf L$ is relatively complemented.
\end{lemma}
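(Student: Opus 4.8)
The plan is to verify both displayed assertions by direct computation, using only the modular law in the form: $u\le w$ implies $u\vee(v\wedge w)=(u\vee v)\wedge w$ (equivalently $(v\vee u)\wedge w=(v\wedge w)\vee u$ when $u\le w$).

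First I would show $\overline x_{ab}=\widehat x_{ab}$. Both sets are the images of $x^+$ under $y\mapsto(y\vee a)\wedge b$ and $y\mapsto(y\wedge b)\vee a$ respectively, so it suffices to prove $(y\vee a)\wedge b=(y\wedge b)\vee a$ for every $y\in L$. Since $a\le b$ by assumption, this is exactly an instance of the modular law, so the two expressions agree and the two sets coincide. Note this step uses neither $x\in[a,b]$ nor that $y$ is a complement of $x$.

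Next, fix $x\in[a,b]$ and $y\in x^+$, and put $z:=(y\vee a)\wedge b=(y\wedge b)\vee a$. Then $a\le z\le b$, so $z\in[a,b]$. To compute $x\vee z$ I would use the representation $z=(y\vee a)\wedge b$ together with $x\le b$: the modular law yields $x\vee\bigl((y\vee a)\wedge b\bigr)=(x\vee y\vee a)\wedge b$, and since $x\vee y=1$ this equals $1\wedge b=b$. To compute $x\wedge z$ I would use the representation $z=(y\wedge b)\vee a$ together with $a\le x$: the modular law yields $x\wedge\bigl((y\wedge b)\vee a\bigr)=(x\wedge y\wedge b)\vee a$, and since $x\wedge y=0$ this equals $0\vee a=a$. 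Hence $z\in x^{ab}$, which gives $\overline x_{ab}=\widehat x_{ab}\subseteq x^{ab}$ for all $x\in[a,b]$.

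Finally, for the ``hence'' part: since $\mathbf L$ is complemented, $x^+\ne\emptyset$ for every $x\in L$; therefore for arbitrary $a,b\in L$ with $a\le b$ and every $x\in[a,b]$ the set $\overline x_{ab}$ is non-empty and contained in $x^{ab}$, so $x^{ab}\ne\emptyset$, i.e.\ $\mathbf L$ is relatively complemented. There is no genuine obstacle here; the only point requiring care is to pick, in each of the two computations, the appropriate one of the two equal expressions for $z$ so that the side condition of the modular law ($x\le b$ in the join computation, $a\le x$ in the meet computation) is the one actually available.
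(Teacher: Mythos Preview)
Your proof is correct and is exactly the standard argument for this well-known fact. The paper itself does not supply a proof: it introduces the lemma with ``The following result is well known'' and moves on, so there is no paper proof to compare against. Your write-up fills this gap cleanly, and your remark about choosing the appropriate representation of $z$ in each of the two computations is precisely the point.
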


An element $v\in[x,y]$ is called {\em induced} by $u\in z^+$ if $v=(u\vee x)\wedge y$, respectively $v=(u\wedge y)\vee x$. We are going to characterize whether an element $v\in[x,y]$ induced by some $u\in z^+$ belongs to $z^{xy}$ . It turns out that this construction is more general, i.e.\ instead of $u$ there can be taken also an element not belonging to $z^+$ provided it satisfies some easy condition.

\begin{proposition}\label{prop3}
Let $\mathbf L=(L,\vee,\wedge)$ be a lattice, $x,y\in L$ with $x\le y$, $z\in[x,y]$ and $u\in L$, assume
\begin{enumerate}
\item[{\rm(1)}] $(u\vee x)\wedge y=(u\wedge y)\vee x$
\end{enumerate}
and put $v:=(u\vee x)\wedge y$. Then $v\in z^{xy}$ if and only if
\begin{enumerate}
\item[{\rm(2)}] $(u\vee x)\wedge z=x$ and $(u\wedge y)\vee z=y$.
\end{enumerate}
\end{proposition}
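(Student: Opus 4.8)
The plan is to reduce the claim to two short absorption computations, one for $z\wedge v$ and one for $z\vee v$, exploiting that by hypothesis~(1) the element $v$ may be written either as $(u\vee x)\wedge y$ or as $(u\wedge y)\vee x$, whichever form is more convenient.

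First I would check that $v$ actually lies in $[x,y]$: from $x\le u\vee x$ and $x\le y$ one gets $x\le(u\vee x)\wedge y=v\le y$, so $v$ is a candidate relative complement. Consequently $v\in z^{xy}$ is, by definition of the operator ${}^{xy}$, equivalent to the conjunction of the two equalities $z\wedge v=x$ and $z\vee v=y$, and it remains only to translate each of these into the corresponding half of~(2).

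For the meet I would use the representation $v=(u\vee x)\wedge y$ together with $z\le y$ to compute $z\wedge v=z\wedge(u\vee x)\wedge y=(z\wedge y)\wedge(u\vee x)=z\wedge(u\vee x)$, so that $z\wedge v=x$ holds exactly when $(u\vee x)\wedge z=x$. Dually, for the join I would use the representation $v=(u\wedge y)\vee x$ — this is the only point where hypothesis~(1) is genuinely used — together with $x\le z$ to compute $z\vee v=z\vee(u\wedge y)\vee x=(z\vee x)\vee(u\wedge y)=z\vee(u\wedge y)$, so that $z\vee v=y$ holds exactly when $(u\wedge y)\vee z=y$. Conjoining the two equivalences gives precisely the equivalence of $v\in z^{xy}$ with~(2).

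I do not expect any real obstacle: the argument is a direct calculation that invokes neither modularity nor complementedness of $\mathbf L$. The only point requiring care is to feed the correct one of the two equal expressions for $v$ into each computation — the meet computation wants the ``$(u\vee x)\wedge y$'' form and the join computation wants the ``$(u\wedge y)\vee x$'' form — which is exactly what condition~(1) legitimises.
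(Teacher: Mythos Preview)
Your proposal is correct and essentially identical to the paper's own proof: both arguments use the absorption $z\wedge y=z$ (from $z\le y$) to reduce $z\wedge v$ to $(u\vee x)\wedge z$, and the absorption $z\vee x=z$ (from $x\le z$) together with hypothesis~(1) to reduce $z\vee v$ to $(u\wedge y)\vee z$. The only cosmetic difference is that the paper writes out the two directions of the biconditional separately, whereas you collapse them into a single chain of equivalences.
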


\begin{proof}
It is evident that $v\in[x,y]$. If $v\in z^{xy}$ then
\begin{align*}
(u\vee x)\wedge z & =z\wedge\big((u\vee x)\wedge y\big)=z\wedge v=x, \\
(u\wedge y)\vee z & =z\vee\big((u\wedge y)\vee x\big)=z\vee v=y,
\end{align*}
i.e.\ (2) holds. If, conversely, (2) holds then
\begin{align*}
  z\vee v & =z\vee\big((u\wedge y)\vee x\big)=(u\wedge y)\vee z=y, \\
z\wedge v & =z\wedge\big((u\vee x)\wedge y\big)=(u\vee x)\wedge z=x,
\end{align*}
i.e.\ $v\in z^{xy}$.
\end{proof}

\begin{remark}
It is worth noticing that if $\mathbf L$ is modular then for $a,b\in L$ with $a\le b$ and $d\in L$ condition {\rm(1)} holds automatically. Hence, for $c\in[a,b]$ we have that $(d\vee a)\wedge b\in c^{ab}$ if and only if condition {\rm(2)} is satisfied.
\end{remark}

We can check the conditions from Proposition~\ref{prop3} in the following example.

\begin{example}\label{ex1}
Consider the non-modular complemented lattice $(L,\vee,\wedge)$ depicted in Fig.~1:

\vspace*{-3mm}

\begin{center}
\setlength{\unitlength}{7mm}
\begin{picture}(8,12)
\put(2,1){\circle*{.3}}
\put(3,3){\circle*{.3}}
\put(1,4){\circle*{.3}}
\put(3,5){\circle*{.3}}
\put(2,7){\circle*{.3}}
\put(6,5){\circle*{.3}}
\put(7,7){\circle*{.3}}
\put(5,8){\circle*{.3}}
\put(7,9){\circle*{.3}}
\put(6,11){\circle*{.3}}
\put(2,1){\line(-1,3)1}
\put(2,1){\line(1,2)1}
\put(3,3){\line(0,1)2}
\put(2,7){\line(-1,-3)1}
\put(2,7){\line(1,-2)1}
\put(6,5){\line(-1,3)1}
\put(6,5){\line(1,2)1}
\put(7,7){\line(0,1)2}
\put(6,11){\line(-1,-3)1}
\put(6,11){\line(1,-2)1}
\put(2,1){\line(1,1)4}
\put(3,3){\line(1,1)4}
\put(1,4){\line(1,1)4}
\put(3,5){\line(1,1)4}
\put(2,7){\line(1,1)4}
\put(1.85,.3){$0$}
\put(.4,3.85){$b$}
\put(2.4,2.85){$a$}
\put(2.4,4.85){$c$}
\put(6.3,4.85){$e$}
\put(1.4,6.85){$d$}
\put(4.4,7.85){$g$}
\put(7.3,6.85){$f$}
\put(7.3,8.85){$h$}
\put(5.85,11.4){$1$}
\put(3.2,-.75){{\rm Fig.~1}}
\put(-.5,-1.75){{\rm Non-modular complemented lattice}}
\end{picture}
\end{center}

\vspace*{10mm}

We can check the following cases.

Consider $g\in[b,1]$. Then $g^+=\{a,c\}$ and for $a\in g^+$ we have $(a\vee b)\wedge1=d=(a\wedge1)\vee b$ and hence {\rm(1)} is satisfied. Moreover, $(a\vee b)\wedge g=d\wedge g=b$ and $(a\wedge1)\vee g=1$ proving {\rm(2)}. Hence, by Proposition~\ref{prop3}, $d\in g^{b1}$. Further, for $c\in g^+$ we have $(c\vee b)\wedge1=d=(c\wedge1)\vee b$ and hence {\rm(1)} is satisfied. Moreover, $(c\vee b)\wedge g=d\wedge g=b$ and $(c\wedge1)\vee g=1$ proving {\rm(2)}. Hence, by Proposition~\ref{prop3}, $d\in g^{b1}$.

Consider $f\in[e,h]$. Clearly, $f^+=b$ and for $b\in f^+$ we have $(b\vee e)\wedge h=g\wedge h=e=0\vee e=d=(b\wedge h)\vee e$ and hence {\rm(1)} is satisfied. Moreover, $(b\vee e)\wedge f=g\wedge f=e$ and $(b\wedge h)\vee f=0\vee f=f\ne h$ showing that {\rm(2)} does not hold. Hence, by Proposition~\ref{prop3}, $e\notin f^{eh}$.

Consider $f\in[e,1]$. Clearly, $f^+=b$ and for $b\in f^+$ we have $(b\vee e)\wedge1=g=(b\wedge1)\vee e$ and hence {\rm(1)} is satisfied. Moreover, $(b\vee e)\wedge f=g\wedge f=e$ and $(b\wedge1)\vee f=1$ proving {\rm(2)}. Hence, by Proposition~\ref{prop3}, $g\in f^{e1}$.

Consider $b\in[0,d]$. Then $b^+=\{f,h\}$ and for $f\in b^+$ we have $(f\vee0)\wedge d=a=(f\wedge d)\vee0$ and hence {\rm(1)} is satisfied. Moreover, $(f\vee0)\wedge b=0$ and $(f\wedge d)\vee b=a\vee b=d$ proving {\rm(2)}. Hence, by Proposition~\ref{prop3}, $a\in b^{0d}$. Further, for $h\in b^+$ we have $(h\vee0)\wedge d=c=(h\wedge d)\vee0$ and hence {\rm(1)} is satisfied. Moreover, $(h\vee0)\wedge b=0$ and $(h\wedge d)\vee b=c\vee b=d$ proving {\rm(2)}. Hence, by Proposition~\ref{prop3}, $c\in b^{0d}$.

Consider $f\in[a,h]$. We have $(b\vee a)\wedge h=d\wedge h=c\ne a=0\vee a=(b\wedge h)\vee a$ and hence {\rm(1)} is not satisfied. Therefore Proposition~\ref{prop3} cannot be applied in this case. Nevertheless, $f^{ah}=\{c\}$.
\end{example}

\section{Properties of the operator $^{ab}$}

Properties of the operator of relative complementation are in certain sense similar to those of the operator of complementation already investigated in by the authors in \cite{CL}. Hence, also the proofs of the following results are written in a similar way, in fact in some cases we only rename the operators. However, for the reader's convenience, the full proofs are presented.

Throughout this section let $(L,\vee,\wedge)$ be a lattice and $a,b\in L$ with $a<b$ and assume $[a,b]$ to be complemented.

The pair $(^{ab},^{ab})$ is the Galois connection between $(2^{[a,b]},\subseteq)$ and $(2^{[a,b]},\subseteq)$ induced by the relation
\[
\{(x,y)\in [a,b]^2\mid x\vee y=b\text{ and }x\wedge y=a\}.
\]
From this we conclude
\begin{align*}
                           A & \subseteq (A^{ab})^{ab}, \\
                A\subseteq B & \Rightarrow B^{ab}\subseteq A^{ab}, \\
\big((A^{ab})^{ab}\big)^{ab} & =A^{ab}, \\
           A\subseteq B^{ab} & \Leftrightarrow B\subseteq A^{ab}
\end{align*}
for all $A,B\subseteq[a,b]$. Since $A\subseteq(A^{ab})^{ab}$ we have that $(A^{ab})^{ab}\ne\emptyset$ whenever $\emptyset\ne A\subseteq[a,b]$. A {\em subset} $A$ of $[a,b]$ is called {\em closed} if $(A^{ab})^{ab}=A$. Let $\Cl([a,b])$ denote the set of all closed subsets of $[a,b]$. Then clearly $\Cl([a,b])=\{A^{ab}\mid A\subseteq[a,b]\}$. If $a<b$ then because of $A^{ab}\cap(A^{ab})^{ab}=\emptyset$ for all $A\subseteq[a,b]$ we have that $\big(\Cl([a,b]),\subseteq,{}^{ab},\emptyset,[a,b]\big)$ forms a complete ortholattice with
\begin{align*}
  \bigvee_{i\in I}A_i & =\left(\left(\bigcup_{i\in I}A_i\right)^{ab}\right)^{ab}, \\
\bigwedge_{i\in I}A_i & =\bigcap_{i\in I}A_i
\end{align*}
for all families $(A_i;i\in I)$ of closed subsets of $[a,b]$.

Next we describe the basic properties of the operator $^{ab}$.

\begin{proposition}\label{prop1}
Let $c\in[a,b]$. Then the following holds:
\begin{enumerate}[{\rm(i)}]
\item $c\in(c^{ab})^{ab}$ and $\big((c^{ab})^{ab}\big)^{ab}=c^{ab}$,
\item $(x^{ab},\le)$ is an antichain for every $x\in[a,b]$ if and only if $[a,b]$ does not contain a sublattice isomorphic to $\mathbf N_5$ containing $a$ and $b$,
\item $(c^{ab},\le)$ is convex,
\item if the mapping $x\mapsto(x^{ab})^{ab}$ from $[a,b]$ to $2^{[a,b]}$ is not injective then the identity $(x^{ab})^{ab}\approx x$ is not satisfied in the interval $[a,b]$.
\end{enumerate}
\end{proposition}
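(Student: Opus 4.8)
The four assertions are essentially independent; (i), (iii) and (iv) are short, and the real content is (ii). I would dispatch the first three quickly and then spend the effort on the $\mathbf N_5$ characterization.

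For (i) I would just instantiate the four Galois-connection identities displayed immediately before the proposition with the singleton $A=\{c\}$ (using the standing convention identifying singletons with their elements): $A\subseteq(A^{ab})^{ab}$ yields $c\in(c^{ab})^{ab}$, and $\big((A^{ab})^{ab}\big)^{ab}=A^{ab}$ yields $\big((c^{ab})^{ab}\big)^{ab}=c^{ab}$; no further hypothesis is used. For (iii), given $p,q\in c^{ab}$ and $r$ with $p\le r\le q$, note first that $a\le p\le r\le q\le b$, so $r\in[a,b]$; then by monotonicity of $\vee$ and $\wedge$ we get $b=c\vee p\le c\vee r\le c\vee q=b$ and $a=c\wedge p\le c\wedge r\le c\wedge q=a$, hence $c\vee r=b$ and $c\wedge r=a$, i.e.\ $r\in c^{ab}$, so $c^{ab}$ is a convex subset of $[a,b]$. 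For (iv) I would argue contrapositively: if the identity $(x^{ab})^{ab}\approx x$ held on $[a,b]$, then the map $x\mapsto(x^{ab})^{ab}$ would be precisely $x\mapsto\{x\}$ (again via the singleton identification), which is visibly injective; hence if this map is not injective the identity must fail.

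The substantive statement is (ii). For the ``if'' direction I would prove the contrapositive: suppose $(x^{ab},\le)$ is not an antichain for some $x\in[a,b]$, so there are $p,q\in x^{ab}$ with $p<q$. From $x\vee p=x\vee q=b$, $x\wedge p=x\wedge q=a$ and $p,q\in[a,b]$ one checks that $a,p,q,x,b$ are pairwise distinct, that $x$ is comparable to neither $p$ nor $q$, and that $\{a,p,q,x,b\}$ is closed under $\vee$ and $\wedge$ (the nontrivial values being $p\vee q=q$, $p\wedge q=p$, $x\vee p=x\vee q=b$, $x\wedge p=x\wedge q=a$); this exhibits a sublattice of $[a,b]$ isomorphic to $\mathbf N_5$ and containing $a$ and $b$. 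Conversely, suppose $[a,b]$ contains a sublattice $S\cong\mathbf N_5$ with $a,b\in S$. Since $a$ and $b$ are the least and greatest elements of $[a,b]$, they are the least and greatest elements of $S$, hence the bottom and top of the pentagon; writing $p<q$ for the two-element comparable side of $S$ and $x$ for the element incomparable to $p$ and $q$, the defining relations of $\mathbf N_5$ give $x\vee p=x\vee q=b$ and $x\wedge p=x\wedge q=a$, so $p,q\in x^{ab}$ with $p<q$, and $(x^{ab},\le)$ is not an antichain.

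The one place needing care --- and the only real obstacle --- is the bookkeeping in the ``if'' direction: one must exclude the degenerate collapses ($x=a$, $x=b$, $p=a$, $q=b$, $x\le p$, $p\le x$, $x\le q$, $q\le x$), each ruled out by a one-line computation from the complementation equations; for instance $x=a$ forces $b=x\vee p=a\vee p=p$, contradicting $p<q\le b$, and $x\le p$ forces $x=x\wedge p=a$, whence the same contradiction. Once these are cleared away, checking that the five elements form a copy of $\mathbf N_5$ is routine.
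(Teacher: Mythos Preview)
Your proposal is correct and follows essentially the same approach as the paper: (i) is deduced from the Galois-connection identities, (iii) is the same monotonicity computation, (iv) is the contrapositive of the paper's direct argument (from $(c^{ab})^{ab}=(d^{ab})^{ab}$ with $c\ne d$ infer $d\in(c^{ab})^{ab}\ne\{c\}$), and (ii) is argued in both directions exactly as the paper does, with your version spelling out the distinctness bookkeeping that the paper leaves largely implicit.
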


\begin{proof}
\
\begin{enumerate}[(i)]
\item follows directly from above.
\item First assume there exists some $d\in[a,b]$ such that $(d^{ab},\le)$ is not an antichain. Then $d\notin\{a,b\}$. Now there exist $e,f\in d^{ab}$ with $e<f$. Since $d\notin\{a,b\}$ and $d\in e^{ab}\cap f^{ab}$ we have $e,f\notin\{a,b\}$. Because of $|[a,b]|>1$ we have $d\notin\{e,f\}$. Hence the elements $a$, $d$, $e$, $f$ and $b$ are pairwise distinct and form an $\mathbf N_5$ containing $a$ and $b$. If, conversely, $[a,b]$ contains a sublattice $(L_1,\vee,\wedge)$ isomorphic to $\mathbf N_5$ and containing $a$ and $b$, say $L_1=\{a,g,h,i,1\}$ with $g<h$ then $g,h\in i^{ab}$ and hence $(i^{ab},\le)$ is not an antichain.
\item If $d,e\in c^{ab}$, $f\in[a,b]$ and $d\le f\le e$ then $b=c\vee d\le c\vee f$ and $c\wedge f\le c\wedge e=0$ showing $f\in c^{ab}$.
\item If the mapping $x\mapsto(x^{ab})^{ab}$ is not injective then there exist $c,d\in[a,b]$ with $c\ne d$ and $(c^{ab})^{ab}=(d^{ab})^{ab}$ which implies $d\in(d^{ab})^{ab}=(c^{ab})^{ab}$ and $d\ne c$ and hence $(c^{ab})^{ab}\ne c$ showing that $[a,b]$ does not satisfy the identity $(x^{ab})^{ab}\approx x$.
\end{enumerate}
\end{proof}

However, also $A^{ab}$ is an antichain for a non-empty subset $A$ of $[a,b]$ provided $[a,b]$ is a modular sublattice of $\mathbf L$, see the following result.

\begin{corollary}\label{cor1}
Assume $[a,b]$ to be a modular sublattice of $\mathbf L$ and let $c\in[a,b]$ and $A$ be a non-empty subset of $[a,b]$. According to Proposition~\ref{prop1} {\rm(iii)}, $(c^{ab},\le)$ is an antichain. Let $d\in A$. Then $A^{ab}\subseteq d^{ab}$ and hence $(A^{ab},\le)$ is an antichain, too. Since $c^{ab}$ is a non-empty subset of $[a,b]$ we finally conclude that $\big((c^{ab})^{ab},\le\big)$ is an antichain, too.
\end{corollary}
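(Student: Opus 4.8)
The plan is to reduce the whole statement to Proposition~\ref{prop1}(ii) together with the classical fact that a modular lattice contains no sublattice isomorphic to $\mathbf N_5$. Since $[a,b]$ is assumed to be a modular sublattice of $\mathbf L$, it contains no copy of $\mathbf N_5$ whatsoever, in particular none having least element $a$ and greatest element $b$. Hence Proposition~\ref{prop1}(ii) applies and tells us that $(x^{ab},\le)$ is an antichain for \emph{every} $x\in[a,b]$. Taking $x:=c$ settles the first assertion, and taking $x:=d$ records that $(d^{ab},\le)$ is an antichain as well.

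Next I would establish $A^{ab}\subseteq d^{ab}$. This is immediate from the Galois-connection properties listed just before Proposition~\ref{prop1}: from $\{d\}\subseteq A$ one gets $A^{ab}\subseteq\{d\}^{ab}=d^{ab}$, where as usual the singleton is identified with its element. (Unfolding the definition gives the same thing: every $y\in A^{ab}$ satisfies $x\vee y=b$ and $x\wedge y=a$ for all $x\in A$, in particular for $x=d$, so $y\in d^{ab}$.) Since $A^{ab}$ is a subset of $d^{ab}$ and $(d^{ab},\le)$ is an antichain, $(A^{ab},\le)$ is an antichain too.

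Finally, the standing assumption of this section is that $[a,b]$ is complemented, so $c^{ab}\ne\emptyset$; thus $c^{ab}$ is itself a non-empty subset of $[a,b]$. Applying the paragraph above with $A:=c^{ab}$ (and any $d\in c^{ab}$) then yields that $\big((c^{ab})^{ab},\le\big)$ is an antichain, which is the last claim. I do not expect any genuine obstacle: once Proposition~\ref{prop1}(ii) is available the argument is pure bookkeeping, and the only external ingredient is the textbook characterization of modularity by the absence of an $\mathbf N_5$ sublattice. If one preferred a self-contained proof of the antichain property, one could argue directly: for $e,f\in c^{ab}$ with $e\le f$, modularity gives $e=e\vee(c\wedge f)=(e\vee c)\wedge f=b\wedge f=f$, so $c^{ab}$ is an antichain without invoking (ii) at all.
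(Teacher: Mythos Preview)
Your proposal is correct and follows essentially the same route as the paper: use Proposition~\ref{prop1} to conclude that each $x^{ab}$ is an antichain, then use the antitonicity of $^{ab}$ (from the Galois connection) to get $A^{ab}\subseteq d^{ab}$, and finally apply this with $A=c^{ab}$. Note that the corollary as printed invokes Proposition~\ref{prop1}(iii), which is the convexity statement; your appeal to (ii) together with the absence of $\mathbf N_5$ in a modular lattice is the pertinent reference, and your closing direct modular computation is a clean alternative that sidesteps (ii) entirely.
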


In case of finite $[a,b]$ we can even prove the following.

\begin{proposition}
Assume $[a,b]$ to be finite, the mapping $x\mapsto(x^{ab})^{ab}$ from $[a,b]$ to $2^{[a,b]}$ to be injective and $c\in[a,b]$. Then there exists some $d\in(c^{ab})^{ab}$ with $(d^{ab})^{ab}=d$.
\end{proposition}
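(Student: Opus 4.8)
The plan is to build $d$ by iterating the closure operator $x\mapsto(x^{ab})^{ab}$ and exploiting finiteness of $[a,b]$ to force the iteration to stabilise at a single element lying inside $(c^{ab})^{ab}$.

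First I would set $c_0:=c$ and proceed recursively: as long as the set $(c_i^{ab})^{ab}$ is not the singleton $\{c_i\}$, choose some $c_{i+1}\in(c_i^{ab})^{ab}$ with $c_{i+1}\ne c_i$. This choice is always available, because $c_i\in(c_i^{ab})^{ab}$ by Proposition~\ref{prop1}(i), so whenever $(c_i^{ab})^{ab}$ fails to equal $\{c_i\}$ it must contain a second element.

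Next I would verify that the sets so produced form a strictly decreasing chain
\[
(c_0^{ab})^{ab}\supsetneq(c_1^{ab})^{ab}\supsetneq(c_2^{ab})^{ab}\supsetneq\cdots.
\]
For the inclusion: each $(c_i^{ab})^{ab}$ is a closed subset of $[a,b]$ (by $\big((A^{ab})^{ab}\big)^{ab}=A^{ab}$), and from $\{c_{i+1}\}\subseteq(c_i^{ab})^{ab}$ applying $^{ab}$ twice, together with closedness, yields $(c_{i+1}^{ab})^{ab}\subseteq(c_i^{ab})^{ab}$. For strictness: since $c_{i+1}\ne c_i$, the assumed injectivity of $x\mapsto(x^{ab})^{ab}$ gives $(c_{i+1}^{ab})^{ab}\ne(c_i^{ab})^{ab}$, so the inclusion is proper.

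Since $[a,b]$ is finite, there is no infinite strictly decreasing chain of subsets of $[a,b]$, so the recursion halts at some index $k$; by the stopping rule this is only possible when $(c_k^{ab})^{ab}=\{c_k\}$. Chaining the inclusions then gives $c_k\in(c_k^{ab})^{ab}\subseteq\cdots\subseteq(c_0^{ab})^{ab}=(c^{ab})^{ab}$, so $d:=c_k$ belongs to $(c^{ab})^{ab}$ and satisfies $(d^{ab})^{ab}=d$ (identifying the singleton with its element). The step I expect to need the most care is confirming that injectivity is exactly what upgrades each inclusion of closed sets to a strict one, and keeping the singleton identification consistent; the rest are routine applications of the Galois-connection identities listed just before Proposition~\ref{prop1}.
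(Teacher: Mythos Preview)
Your argument is correct and follows essentially the same route as the paper: iterate the closure operator, use injectivity to make the inclusions $(c_{i+1}^{ab})^{ab}\subseteq(c_i^{ab})^{ab}$ strict, and invoke finiteness to force termination at a singleton inside $(c^{ab})^{ab}$. If anything, your write-up is slightly more explicit than the paper's in justifying the inclusion step via the Galois-connection identities.
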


\begin{proof}
If $(c^{ab})^{ab}=c$ then put $d:=c$. Now assume $(c^{ab})^{ab}\ne c$. Let $c_1\in(c^{ab})^{ab}\setminus\{c\}$. Then $(c_1^{ab})^{ab}\subseteq(c^{ab})^{ab}$. Since $c_1\ne c$ and $x\mapsto(x^{ab})^{ab}$ is injective we conclude $(c_1^{ab})^{ab}\subsetneqq(c^{ab})^{ab}$. Now either $(c_1^{ab})^{ab}=c_1$ or there exists some $c_2\in(c_1^{ab})^{ab}\setminus\{c_1\}$. In the latter case $(c_2^{ab})^{ab}\subseteq(c_1^{ab})^{ab}$. Since $c_2\ne c_1$ and $x\mapsto(x^{ab})^{ab}$ is injective we conclude $(c_2^{ab})^{ab}\subsetneqq(c_1^{ab})^{ab}$. Now either $(c_2^{ab})^{ab}=c_2$ or there exists some $c_3\in(c_2^{ab})^{ab}\setminus\{c_2\}$. Since $[a,b]$ is finite and $(c_1^{ab})^{ab}\supsetneqq(c_2^{ab})^{ab}\supsetneqq\cdots$ there exists some $n\ge1$ with $|(c_n^{ab})^{ab}|=1$, i.e.\ $(c_n^{ab})^{ab}=c_n$ and we have $c_n\in(c_n^{ab})^{ab}\subseteq(c_{n-1}^{ab})^{ab}\subseteq\cdots\subseteq(c_1^{ab})^{ab}\subseteq(c^{ab})^{ab}$.
\end{proof}

Let $(P,\le)$ be a poset and $A,B\subseteq P$. Define $A\le_1B$ if for every $a\in A$ there exists some $b\in B$ with $a\le b$. Clearly, $\le_1$ is a reflexive and transitive relation on $2^P$ and, moreover, $A\subseteq B$ implies $A\le_1B$. We define $A=_1B$ if $A\le_1B$ as well as $B\le_1A$.

The relationship between the operator $^{ab}$ and the partial order relation on $[a,b]$ is illuminated in the following result.

\begin{theorem}
Consider the following statements:
\begin{enumerate}[{\rm(i)}]
\item $x^{ab}\vee y^{ab}\le_1(x\wedge y)^{ab}$ for all $x,y\in[a,b]$,
\item for all $x,y\in[a,b]$, $x\le y$ implies $y^{ab}\le_1x^{ab}$,
\item $(x\vee y)^{ab}\le_1x^{ab}\wedge y^{ab}$ for all $x,y\in[a,b]$,
\item $x^{ab}\vee y^{ab}\subseteq(x\wedge y)^{ab}$ for all $x,y\in[a,b]$,
\item $x^{ab}\wedge y^{ab}\subseteq(x\vee y)^{ab}$ for all $x,y\in[a,b]$,
\item $(x\vee y)^{ab}=_1x^{ab}\wedge y^{ab}$ for all $x,y\in[a,b]$.
\end{enumerate}
Then {\rm(i)} implies {\rm(ii)}, {\rm(ii)} and {\rm(iii)} are equivalent, {\rm(iv)} implies {\rm(i)} -- {\rm(iii)}, and {\rm(iv)} and {\rm(v)} together imply {\rm(vi)}.
\end{theorem}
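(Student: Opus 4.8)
The plan is to establish the four asserted implications one at a time, in each case by a short direct computation that unwinds the definitions $x^{ab}\vee y^{ab}=\{u\vee w\mid u\in x^{ab},\,w\in y^{ab}\}$ and $x^{ab}\wedge y^{ab}=\{u\wedge w\mid u\in x^{ab},\,w\in y^{ab}\}$, and uses the standing assumption that $[a,b]$ is complemented, so that $x^{ab}\neq\emptyset$ for every $x\in[a,b]$, together with the two general facts about $\le_1$ recorded before the theorem, namely that $\le_1$ is reflexive and transitive and that $A\subseteq B$ implies $A\le_1 B$.

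For (i)~$\Rightarrow$~(ii): assume (i), let $x\le y$ so that $x\wedge y=x$, and fix $w\in y^{ab}$; choosing some $u\in x^{ab}$ (possible since $[a,b]$ is complemented) we get $u\vee w\in x^{ab}\vee y^{ab}\le_1(x\wedge y)^{ab}=x^{ab}$, hence some $z\in x^{ab}$ with $w\le u\vee w\le z$, which is exactly $y^{ab}\le_1 x^{ab}$. For (ii)~$\Leftrightarrow$~(iii): if (ii) holds and $v\in(x\vee y)^{ab}$, then from $x,y\le x\vee y$ one obtains $u\in x^{ab}$ and $w\in y^{ab}$ with $v\le u$ and $v\le w$, so $v\le u\wedge w\in x^{ab}\wedge y^{ab}$, giving (iii); conversely, if (iii) holds and $x\le y$, then $(x\vee y)^{ab}=y^{ab}$, and applying (iii) to a given $w\in y^{ab}$ yields $u\in x^{ab}$, $w'\in y^{ab}$ with $w\le u\wedge w'\le u$, so $y^{ab}\le_1 x^{ab}$.

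For (iv)~$\Rightarrow$~(i)--(iii): (iv) gives (i) at once via $A\subseteq B\Rightarrow A\le_1 B$, and then (ii) and (iii) follow from the implications just proved. For (iv) and (v) together $\Rightarrow$ (vi): the half $(x\vee y)^{ab}\le_1 x^{ab}\wedge y^{ab}$ is precisely statement (iii), which holds because (iv) does, while the half $x^{ab}\wedge y^{ab}\le_1(x\vee y)^{ab}$ is immediate from (v) and $A\subseteq B\Rightarrow A\le_1 B$; combining the two halves gives $(x\vee y)^{ab}=_1 x^{ab}\wedge y^{ab}$.

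None of these steps is technically hard; the one place that needs attention---and which I would flag explicitly---is the use of complementedness of $[a,b]$ in (i)~$\Rightarrow$~(ii) to produce an element $u\in x^{ab}$. Without it, (i) could be satisfied vacuously (when $x^{ab}=\emptyset$) while (ii) fails, so this hypothesis is genuinely used rather than decorative.
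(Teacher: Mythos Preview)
Your proof is correct and follows essentially the same route as the paper's: each implication is obtained by the same elementary $\le_1$ manipulations (e.g.\ for (i)$\Rightarrow$(ii) the paper writes the chain $d^{ab}\le_1 c^{ab}\vee d^{ab}\le_1(c\wedge d)^{ab}=c^{ab}$, which is exactly your argument with the element $u\in x^{ab}$ left implicit). Your explicit flagging of where complementedness of $[a,b]$ is used---to guarantee $x^{ab}\neq\emptyset$ in the step $y^{ab}\le_1 x^{ab}\vee y^{ab}$---is a genuine clarification; the paper relies on the same standing hypothesis but does not point it out.
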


\begin{proof}
Let $c,d\in[a,b]$. \\
(i) implies (ii): \\
$c\le d$ implies $d^{ab}\le_1c^{ab}\vee d^{ab}\le(c\wedge d)^{ab}=c^{ab}$. \\
(ii) implies (iii): \\
Because of $c,d\le c\vee d$ we have $(c\vee d)^{ab}\le_1c^{ab},d^{ab}$ which implies $(c\vee d)^{ab}\le_1c^{ab}\wedge d^{ab}$. \\
(iii) implies (ii): \\
$c\le d$ implies $d^{ab}=(c\vee d)^{ab}\le_1c^{ab}\wedge d^{ab}\le_1c^{ab}$. \\
(iv) implies (i) -- (iii): \\
We have (i) and hence also (ii) and (iii) are satisfied. \\
(iv) and (v) together imply (vi): \\
We have seen that (iv) implies (iii). Now $x^{ab}\wedge y^{ab}\le_1(x\vee y)^{ab}$ for all $x,y\in[a,b]$ which together with (iii) yields (vi).
\end{proof}

\section{Modular relatively complemented lattices}

Our next task is to characterize the property that $[a,b]$ satisfies the identity $(x^{ab})^{ab}\approx x$.

As mentioned in Example~\ref{ex1} and in the previous section, this task has a deeper sense only if the interval $[a,b]$ in question is assumed to be a modular sublattice of $\mathbf L$.

Again, we can modify our results from \cite{CL}.

\begin{theorem}\label{th2}
Let $\mathbf L=(L,\vee,\wedge)$ be a lattice, $a,b\in L$ with $a\le b$ and assume $[a,b]$ to be a modular sublattice of $\mathbf L$. Then the following are equivalent:
\begin{enumerate}[{\rm(i)}]
\item $[a,b]$ satisfies the identity $(x^{ab})^{ab}\approx x$,
\item for every $x\in[a,b]$ and each $y\in(x^{ab})^{ab}$ there exists some $z\in y^{ab}$ satisfying either $(x\vee y)\wedge z=a$ or $(x\wedge y)\vee z=b$.
\end{enumerate}
\end{theorem}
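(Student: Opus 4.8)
The plan is to prove the two implications separately, exploiting the symmetry built into the definition of $^{ab}$ and the modularity of $[a,b]$. For the direction (i) $\Rightarrow$ (ii), assume $[a,b]$ satisfies $(x^{ab})^{ab}\approx x$. Fix $x\in[a,b]$ and $y\in(x^{ab})^{ab}$; by (i) this forces $y=x$. Then I must produce $z\in x^{ab}$ with $(x\vee x)\wedge z=a$ or $(x\wedge x)\vee z=b$, i.e.\ $x\wedge z=a$ or $x\vee z=b$. Since $[a,b]$ is complemented (being a modular sublattice with top $b$ and bottom $a$ --- or at least the relevant interval admits complements, which is what we need here; if not, this case is vacuous), any $z\in x^{ab}$ already satisfies both $x\vee z=b$ and $x\wedge z=a$ by definition, so the condition is automatic. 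The only subtlety is whether $x^{ab}$ is non-empty; but if $x^{ab}=\emptyset$ then $(x^{ab})^{ab}=[a,b]$, and $(i)$ forces $[a,b]=\{x\}$, whence $a=b=x$ and we may take $z=x=a=b$, which lies in $x^{ab}$ trivially. So this direction is essentially bookkeeping.

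The substantive direction is (ii) $\Rightarrow$ (i). Assume (ii) and suppose toward a contradiction that (i) fails, so by Proposition~\ref{prop1}(iv) the mapping $x\mapsto(x^{ab})^{ab}$ is relevant and there is some $x\in[a,b]$ with $(x^{ab})^{ab}\ne x$, hence some $y\in(x^{ab})^{ab}$ with $y\ne x$. Apply (ii) to this pair to get $z\in y^{ab}$ with $(x\vee y)\wedge z=a$ or $(x\wedge y)\vee z=b$. The goal is to derive that then also $z\in x^{ab}$, which combined with $y\in(x^{ab})^{ab}$ and the Galois-connection fact $(x^{ab})^{ab}\perp x^{ab}$ (more precisely $A^{ab}\cap(A^{ab})^{ab}=\emptyset$ when $a<b$, or directly: $y\in(x^{ab})^{ab}$ means $y$ is comparable-incomparable-related to every element of $x^{ab}$ via $\vee=b,\wedge=a$) --- actually the cleaner route: from $z\in x^{ab}$ and $y\in(x^{ab})^{ab}$ we get $y\vee z=b$ and $y\wedge z=a$, so $z\in y^{ab}$ as well, and then $y\wedge z=a$, $y\vee z=b$ together with whichever of the two displayed equalities holds will, via modularity, pin down $y=x$, the desired contradiction.

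The heart of the matter, and the step I expect to be the main obstacle, is the modular computation showing $z\in y^{ab}\Rightarrow z\in x^{ab}$ under hypothesis (ii)'s extra equation. Concretely: I have $y\vee z=b$, $y\wedge z=a$, and say $(x\vee y)\wedge z=a$ (the other case being dual, swapping $\vee\leftrightarrow\wedge$, $a\leftrightarrow b$, and $\le\leftrightarrow\ge$). I want $x\wedge z=a$ and $x\vee z=b$. For $x\wedge z$: since $x\le x\vee y$, we get $x\wedge z\le(x\vee y)\wedge z=a$, so $x\wedge z=a$. For $x\vee z=b$: this is where modularity of $[a,b]$ must be invoked together with the relation between $y$ and $x^{ab}$. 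Since $y\in(x^{ab})^{ab}$, every element of $x^{ab}$ is a relative complement of $y$; I'll want to exhibit $z$ or some element built from $z$ as lying in $x^{ab}$. One approach: compute $x\vee z$ and use that $y\in(x^{ab})^{ab}$ forces a relationship $y\le x\vee z$ or similar --- but membership in $(x^{ab})^{ab}$ does not directly give order comparisons with arbitrary $z$. The trick is presumably to note $z\in y^{ab}$ and $y\in(x^{ab})^{ab}$ means $(x^{ab})^{ab}\subseteq z^{ab}$... no. Rather: we should show directly that $z$ together with the equation $(x\vee y)\wedge z=a$ yields, via a modular identity applied in $[a,b]$, that $(x\vee z)\wedge y = ?$ equals $b\wedge y=y$ or forces $x\vee z\ge y$; combined with $x\vee z\ge z$ and $y\vee z=b$ this gives $x\vee z=b$. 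Then $z\in x^{ab}$, hence $z$ is a relative complement of $x$, and $z$ is also one of $y$; a standard fact in modular lattices (two elements with a common relative complement in an interval, one lying in the bicommutant of the other's complement-set) forces $x=y$. I will organize the write-up as: (1) dispose of (i)$\Rightarrow$(ii) quickly; (2) set up the contradiction for (ii)$\Rightarrow$(i); (3) do the $\vee$-case modular computation in full, then remark the $\wedge$-case is dual; (4) conclude $x=y$, contradicting $y\ne x$.
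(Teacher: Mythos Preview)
Your direction (i)$\Rightarrow$(ii) is fine and matches the paper. The gap is in (ii)$\Rightarrow$(i). Your plan is to show $z\in x^{ab}$ and then conclude $x=y$ from the fact that $x$ and $y$ share the relative complement $z$; but neither step goes through as stated. First, the claim that ``two elements with a common relative complement, one in the bicommutant of the other's complement-set, must coincide'' is not a fact in modular lattices without an additional comparability hypothesis (in $\mathbf M_3$ any two distinct atoms share a complement). Second, and more importantly, you never establish $x\vee z=b$: your suggested route is to show $x\vee z\ge y$, but the modular computation actually points the other way. From $(x\vee y)\wedge z=a$ and $y\vee z=b$ one gets, using $y\le x\vee y$ and modularity,
\[
x\vee y=b\wedge(x\vee y)=(y\vee z)\wedge(x\vee y)=y\vee\big(z\wedge(x\vee y)\big)=y\vee a=y,
\]
i.e.\ $x\le y$; this does \emph{not} yield $x\vee z\ge y$, and in fact only gives $x\vee z\le y\vee z=b$.

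The ingredient you are missing is the antichain property: by Corollary~\ref{cor1} (this is where modularity of $[a,b]$ is really used), $\big((x^{ab})^{ab},\le\big)$ is an antichain. Since $x$ and $y$ are distinct elements of $(x^{ab})^{ab}$, the conclusion $x\le y$ displayed above is already the contradiction---there is no need to drag $z$ into $x^{ab}$. The dual case $(x\wedge y)\vee z=b$ gives $y\le x$ by the symmetric computation. This is exactly the paper's argument; it is shorter and avoids the unprovable intermediate claim $z\in x^{ab}$.
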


\begin{proof}
$\text{}$ \\
(i) $\Rightarrow$ (ii): \\
If $c\in[a,b]$, $d\in(c^{ab})^{ab}$ and $e\in d^{ab}$ then $d=c$ and $(c\vee c)\wedge e=d\wedge e=0$. \\
(ii) $\Rightarrow$ (i): \\
Suppose $[a,b]$ not to satisfy the identity $(x^{ab})^{ab}\approx x$. Then there exists some $c\in[a,b]$ with $(c^{ab})^{ab}\ne c$. Let $\in(c^{ab})^{ab}\setminus\{c\}$. According to (ii) there exists some $e\in d^{ab}$ satisfying either $(c\vee d)\wedge e=a$ or $(c\wedge d)\vee e=b$. Since $c$ and $d$ are different elements of $(c^{ab})^{ab}$ and $\big((c^{ab})^{ab},\le\big)$ is an antichain according to Corollary~\ref{cor1}, we conclude $c\parallel d$. Now $(c\vee d)\wedge e=a$ would imply
\[
c\le c\vee d=b\wedge(c\vee d)=(d\vee e)\wedge(c\vee d)=d\vee\big(e\wedge(c\vee d)\big)=d\vee a=d
\]
contradicting $c\parallel d$. On the other hand, $(c\wedge d)\vee e=b$ would imply
\[
d=b\wedge d=\big((c\wedge d)\vee e\big)\wedge d=(c\wedge d)\vee(e\wedge d)=(c\wedge d)\vee a=c\wedge d\le c
\]
again contradicting $c\parallel d$. This shows that $[a,b]$ satisfies the identity $(x^{ab})^{ab}\approx x$.
\end{proof}

This result can be demonstrated by the following example.

\begin{example}
Consider the modular relatively complemented lattice $\mathbf L=(L,\vee,\wedge)$ visualized in Fig.~2:

\vspace*{-3mm}

\begin{center}
\setlength{\unitlength}{7mm}
\begin{picture}(14,8)
\put(7,1){\circle*{.3}}
\put(1,3){\circle*{.3}}
\put(3,3){\circle*{.3}}
\put(5,3){\circle*{.3}}
\put(7,3){\circle*{.3}}
\put(9,3){\circle*{.3}}
\put(11,3){\circle*{.3}}
\put(13,3){\circle*{.3}}
\put(1,5){\circle*{.3}}
\put(3,5){\circle*{.3}}
\put(5,5){\circle*{.3}}
\put(7,5){\circle*{.3}}
\put(9,5){\circle*{.3}}
\put(11,5){\circle*{.3}}
\put(13,5){\circle*{.3}}
\put(7,7){\circle*{.3}}
\put(7,1){\line(-3,1)6}
\put(7,1){\line(-2,1)4}
\put(7,1){\line(-1,1)2}
\put(7,1){\line(0,1)6}
\put(7,1){\line(1,1)2}
\put(7,1){\line(2,1)4}
\put(7,1){\line(3,1)6}
\put(7,7){\line(-3,-1)6}
\put(7,7){\line(-2,-1)4}
\put(7,7){\line(-1,-1)2}
\put(7,7){\line(1,-1)2}
\put(7,7){\line(2,-1)4}
\put(7,7){\line(3,-1)6}
\put(1,3){\line(0,1)2}
\put(1,3){\line(1,1)2}
\put(1,3){\line(2,1)4}
\put(3,3){\line(-1,1)2}
\put(3,3){\line(2,1)4}
\put(3,3){\line(3,1)6}
\put(5,3){\line(-2,1)4}
\put(5,3){\line(3,1)6}
\put(5,3){\line(4,1)8}
\put(7,3){\line(-2,1)4}
\put(7,3){\line(2,1)4}
\put(9,3){\line(-3,1)6}
\put(9,3){\line(0,1)2}
\put(9,3){\line(2,1)4}
\put(11,3){\line(-3,1)6}
\put(11,3){\line(-2,1)4}
\put(11,3){\line(1,1)2}
\put(13,3){\line(-4,1)8}
\put(13,3){\line(-2,1)4}
\put(13,3){\line(-1,1)2}
\put(6.85,.3){$0$}
\put(.4,2.85){$a$}
\put(2.4,2.85){$b$}
\put(4.4,2.85){$c$}
\put(7.3,2.85){$d$}
\put(9.3,2.85){$e$}
\put(11.3,2.85){$f$}
\put(13.3,2.85){$g$}
\put(.4,4.85){$h$}
\put(2.4,4.85){$i$}
\put(4.4,4.85){$j$}
\put(7.3,4.85){$k$}
\put(9.3,4.85){$l$}
\put(11.3,4.85){$m$}
\put(13.3,4.85){$n$}
\put(6.85,7.4){$1$}
\put(6.2,-.75){{\rm Fig.~2}}
\put(1.6,-1.75){{\rm Modular relatively complemented lattice}}
\end{picture}
\end{center}

\vspace*{10mm}

The lattice $\mathbf L$ is isomorphic to the lattice of linear subspaces of the three-dimensional vector space over the two-element field and hence $\mathbf L$ is modular and complemented. The join of two different atoms is a coatom and the meet of two different coatoms is an atom. Every atom is covered by three coatoms and every coatom covers three atoms. Now
\begin{align*}
\overline a_{0h} & =\widehat a_{0h}=(\{k,l,m,n\}\vee0)\wedge h=\{b,c\}=a^{0h}, \\
   (a^{0h})^{0h} & =\{b,c\}^{0h}=a.
\end{align*}

Similarly we can proceed in the remaining cases. Hence, {\rm(ii)} of Theorem~\ref{th2} is satisfied. Thus we have
\begin{align*}
\overline z_{xy} & =\widehat z_{xy}=z^{xy}, \\
   (z^{xy})^{xy} & =z
\end{align*}
for all $x,y\in L$ with $x\le y$ and all $z\in[x,y]$.
\end{example}

In what follows we present a class of lattices where condition (ii) of Theorem~\ref{th2} holds and hence identity (i) is satisfied.

For $n\ge3$ let $\mathbf M_n$ denote the modular lattice depicted in Fig.~3:

\vspace*{-3mm}

\begin{center}
\setlength{\unitlength}{7mm}
\begin{picture}(8,6)
\put(4,1){\circle*{.3}}
\put(1,3){\circle*{.3}}
\put(3,3){\circle*{.3}}
\put(7,3){\circle*{.3}}
\put(4,5){\circle*{.3}}
\put(4,1){\line(-3,2)3}
\put(4,1){\line(-1,2)1}
\put(4,1){\line(3,2)3}
\put(4,5){\line(-3,-2)3}
\put(4,5){\line(-1,-2)1}
\put(4,5){\line(3,-2)3}
\put(3.85,.3){$0$}
\put(.25,2.85){$a_1$}
\put(2.25,2.85){$a_2$}
\put(4.5,2.85){$\cdots$}
\put(7.3,2.85){$a_n$}
\put(3.85,5.4){$1$}
\put(3.2,-.75){{\rm Fig.~3}}
\put(-2.3,-1.75){The modular relatively complemented lattice $\mathbf M_n$}
\end{picture}
\end{center}

\vspace*{10mm}

We can easily prove

\begin{theorem}\label{th1}
Let $\mathbf L$ be the direct product of a Boolean algebra and an arbitrary number of lattices $\mathbf M_n$ for possibly various positive integers $n$, $a,b\in L$ with $a\le b$ and $c\in[a,b]$. Then the following holds:
\begin{enumerate}[{\rm(i)}]
\item $(c^{ab})^{ab}=c$,
\item $\overline c_{ab}=\widehat c_{ab}=c^{ab}$.
\end{enumerate}
\end{theorem}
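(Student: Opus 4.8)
The plan is to reduce Theorem~\ref{th1} to the single-factor case using the fact that all the relevant operations ($\vee$, $\wedge$, the operators $^+$, $^{ab}$, and the maps $\overline c_{ab}$, $\widehat c_{ab}$) are computed coordinatewise in a direct product. First I would record that if $\mathbf L=\prod_{j\in J}\mathbf L_j$ and $a=(a_j)$, $b=(b_j)$ with $a\le b$, then $[a,b]$ is the product of the intervals $[a_j,b_j]$, and for $c=(c_j)\in[a,b]$ one has $c^{ab}=\prod_{j\in J}c_j^{a_jb_j}$, where on each factor the operator is taken inside $\mathbf L_j$. Consequently $(c^{ab})^{ab}=\prod_j (c_j^{a_jb_j})^{a_jb_j}$, and likewise $\overline c_{ab}=\prod_j\overline{c_j}_{a_jb_j}$ and $\widehat c_{ab}=\prod_j\widehat{c_j}_{a_jb_j}$ (for the latter two one uses that $c^+=\prod_j c_j^+$ in a product of bounded lattices, which is the analogue of the fact quoted for $^+$ in Section~2). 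So it suffices to prove (i) and (ii) when $\mathbf L$ is either a single Boolean algebra or a single $\mathbf M_n$.

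For the Boolean case, $\mathbf L$ is distributive and complemented, hence relatively complemented, and by Lemma~1.2 (the modular case) $\overline c_{ab}=\widehat c_{ab}\subseteq c^{ab}$; moreover every interval of a distributive lattice is distributive, so $c$ has a unique relative complement in $[a,b]$, giving $c^{ab}$ a singleton and forcing $\overline c_{ab}=\widehat c_{ab}=c^{ab}$. Applying $^{ab}$ once more to this singleton and again using uniqueness of relative complements yields $(c^{ab})^{ab}=c$. For $\mathbf L=\mathbf M_n$: the only intervals $[a,b]$ that are not chains are those with $a=0$, $b=1$, in which case $[a,b]=\mathbf M_n$ itself; in a chain every element is its own and only relative complement of itself (taking $c=a=b$), so there the statement is trivial. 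In $\mathbf M_n$ the element $0$ has relative complement $\{1\}$, the element $1$ has $\{0\}$, and each atom $a_i$ has $c^{01}=\{a_j\mid j\ne i\}$; one then checks directly that $(c^{01})^{01}=c$ in each of these cases, and that $\overline c_{01}=\widehat c_{01}$ equals $c^{01}$, using that $\mathbf M_n$ is modular and complemented so Lemma~1.2 gives the inclusion and a short computation gives equality.

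Alternatively, and perhaps more in the spirit of "we can easily prove", I would verify condition (ii) of Theorem~\ref{th2} for each factor, which by that theorem yields $(x^{xy})^{xy}\approx x$ on every interval and in particular (i); then (ii) of Theorem~\ref{th1} follows from Lemma~1.2 together with $|c^{ab}|=1$. Condition (ii) of Theorem~\ref{th2} asks, for $y\in(x^{xy})^{xy}$, for some $z\in y^{xy}$ with $(x\vee y)\wedge z=a$ or $(x\wedge y)\vee z=b$; in a Boolean interval $y=x$ and any $z\in x^{ab}$ works since $x\wedge z=a$; in $\mathbf M_n$ the same check on the finitely many cases listed above settles it.

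The routine-but-slightly-delicate step — the one I would be most careful about — is the claim that the coordinatewise description of $^{ab}$, $^+$, $\overline{\phantom c}_{ab}$ and $\widehat{\phantom c}_{ab}$ really does commute with arbitrary (possibly infinite) direct products; this is clear for $\vee$ and $\wedge$ and hence for the defining equations $x\vee y=b$, $x\wedge y=a$, so $c^{ab}=\prod_j c_j^{a_jb_j}$ is immediate, but one must note that $\prod_j c_j^+$ is taken relative to the bounds of each $\mathbf L_j$ and that an empty factor would break the argument — which cannot happen here since Boolean algebras and the $\mathbf M_n$ are complemented, so every $c_j^{a_jb_j}$ is non-empty and in fact a singleton. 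Once that bookkeeping is in place, the rest is the finite case analysis sketched above.
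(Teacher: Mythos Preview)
Your approach is exactly the paper's: verify (i) and (ii) for Boolean algebras and for each $\mathbf M_n$, then observe that both properties are preserved under direct products; the paper's proof is a single sentence asserting precisely these two steps, and you have simply filled in the details.

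Two small slips to fix. First, the parenthetical ``and in fact a singleton'' is false: in $\mathbf M_n$ an atom $a_i$ has $n-1\ge2$ relative complements in $[0,1]$, so $c_j^{a_jb_j}$ need not be a singleton --- but only non-emptiness is needed for the coordinatewise reduction, and that you do have. Second, in your alternative route the sentence ``(ii) of Theorem~\ref{th1} follows from Lemma~1.2 together with $|c^{ab}|=1$'' fails for the same reason; in $\mathbf M_n$ one must check $\overline c_{01}=\widehat c_{01}=c^{01}$ directly, as you correctly do in your primary argument.
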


\begin{proof}
Since Boolean algebras and lattices of the form $\mathbf M_n$ have properties (i) and (ii) and these properties are preserved by forming direct products, also $\mathbf L$ has these properties.
\end{proof}

The following example illuminates Theorem~\ref{th1}.

\begin{example}
Let $\mathbf2$ denote the two-element Boolean algebra. Consider the modular relatively complemented lattice $\mathbf L_1=(L_1,\vee,\wedge)=\mathbf 2\times\mathbf M_3$ visualized in Fig.~4:	

\vspace*{-3mm}

\begin{center}
\setlength{\unitlength}{7mm}
\begin{picture}(12,8)
\put(3,1){\circle*{.3}}
\put(1,3){\circle*{.3}}
\put(3,3){\circle*{.3}}
\put(5,3){\circle*{.3}}
\put(3,5){\circle*{.3}}
\put(9,3){\circle*{.3}}
\put(7,5){\circle*{.3}}
\put(9,5){\circle*{.3}}
\put(11,5){\circle*{.3}}
\put(9,7){\circle*{.3}}
\put(3,1){\line(-1,1)2}
\put(3,1){\line(0,1)4}
\put(3,1){\line(1,1)2}
\put(3,5){\line(-1,-1)2}
\put(3,5){\line(1,-1)2}
\put(9,3){\line(-1,1)2}
\put(9,3){\line(0,1)4}
\put(9,3){\line(1,1)2}
\put(9,7){\line(-1,-1)2}
\put(9,7){\line(1,-1)2}
\put(3,1){\line(3,1)6}
\put(1,3){\line(3,1)6}
\put(3,3){\line(3,1)6}
\put(5,3){\line(3,1)6}
\put(3,5){\line(3,1)6}
\put(2.85,.3){$0$}
\put(.4,2.85){$a$}
\put(2.4,2.85){$b$}
\put(4.4,2.85){$c$}
\put(2.85,5.4){$d$}
\put(8.85,2.3){$e$}
\put(7.3,4.85){$f$}
\put(9.3,4.85){$g$}
\put(11.3,4.85){$h$}
\put(8.85,7.4){$1$}
\put(5.2,-.75){{\rm Fig.~4}}
\put(.7,-1.75){{\rm Modular relatively complemented lattice}}
\end{picture}
\end{center}

\vspace*{10mm}

It is easy to check that $(z^{xy})^{xy}=z$ for all $x,y\in L_1$ with $x\le y$ and for all $z\in[x,y]$ as pointed out in Theorem~\ref{th1}.
\end{example}

We now present an example of a relatively complemented lattice $(L,\vee,\wedge)$ not satisfying $\overline c_{ab}=c^{ab}$ for all $a,b\in L$ with $a\le b$ and all $c\in[a,b]$.

\begin{example}
Consider the non-modular relatively complemented lattice $\mathbf L_2=(L,\vee,\wedge)$ depicted in Fig.~5:

\vspace*{-3mm}

\begin{center}
\setlength{\unitlength}{7mm}
\begin{picture}(14,8)
\put(3,1){\circle*{.3}}
\put(1,3){\circle*{.3}}
\put(3,3){\circle*{.3}}
\put(5,3){\circle*{.3}}
\put(3,5){\circle*{.3}}
\put(9,3){\circle*{.3}}
\put(7,5){\circle*{.3}}
\put(9,5){\circle*{.3}}
\put(11,5){\circle*{.3}}
\put(13,5){\circle*{.3}}
\put(9,7){\circle*{.3}}
\put(3,1){\line(-1,1)2}
\put(3,1){\line(0,1)4}
\put(3,1){\line(1,1)2}
\put(3,5){\line(-1,-1)2}
\put(3,5){\line(1,-1)2}
\put(9,3){\line(-1,1)2}
\put(9,3){\line(0,1)4}
\put(9,3){\line(1,1)2}
\put(9,3){\line(2,1)4}
\put(9,7){\line(-1,-1)2}
\put(9,7){\line(1,-1)2}
\put(9,7){\line(2,-1)4}
\put(3,1){\line(3,1)6}
\put(1,3){\line(3,1)6}
\put(3,3){\line(3,1)6}
\put(5,3){\line(3,1)6}
\put(3,5){\line(3,1)6}
\put(2.85,.3){$0$}
\put(.4,2.85){$a$}
\put(2.4,2.85){$b$}
\put(4.4,2.85){$c$}
\put(2.85,5.4){$d$}
\put(8.85,2.3){$e$}
\put(7.3,4.85){$f$}
\put(9.3,4.85){$g$}
\put(11.3,4.85){$h$}
\put(13.3,4.85){$i$}
\put(8.85,7.4){$1$}
\put(6.2,-.75){{\rm Fig.~5}}
\put(1,-1.75){{\rm Non-modular relatively complemented lattice}}
\end{picture}
\end{center}

\vspace*{10mm}

Evidently, the lattice $\mathbf L_1$ from Fig.~4 is a sublattice of $\mathbf L_2$. Moreover, $\mathbf L_2$ is not modular since $\{0,d,e,i,1\}$ forms a forbidden sublattice $\mathbf N_5$. On the other hand, the interval $[e,1]$ is a modular sublattice of $\mathbf L_2$. We have
\begin{align*}
         	 h^+ & =\{a,b\}, \\
\overline h_{e1} & =(\{a,b\}\vee e)\wedge1=\{f,g\}, \\
 \widehat h_{e1} & =(\{a,b\}\wedge1)\vee e=\{f,g\}, \\
          h^{e1} & =\{f,g,i\}.
\end{align*}
Observe that the relative complement $i$ of $h$ in $[e,1]$ is neither contained in $\overline h_{e1}$ nor in $\widehat h_{e1}$ since $i$ is both join- and meet-irreducible. Further,
\begin{align*}
             i^+ & =\{a,b,c,d\}, \\
\overline i_{e1} & =(\{a,b,c,d\}\vee e)\wedge1=\{f,g,h,1\}, \\
 \widehat i_{e1} & =(\{a,b,c,d\}\wedge1)\vee e=\{f,g,h,1\}, \\
          i^{e1} & =\{f,g,h\}.
\end{align*}
Observe that the element $1$ of $\overline i_{e1}$ and $\widehat i_{e1}$ is not a relative complement of $i$ in $[e,1]$.
\end{example}

\begin{remark}
Let $\mathbf L=(L,\vee,\wedge)$ be a lattice and $a,b,c\in L$ with $a<b<c$. We ask whether it is possible that two different complements of $c$ induce the same element of $\overline c_{ab}$, respectively $\widehat c_{ab}$. We present two sufficient conditions as follows.

If $\mathbf L$ contains a sublattice of the form visualized in Fig.~6

\vspace*{-3mm}

\begin{center}
\setlength{\unitlength}{7mm}
\begin{picture}(8,8)
\put(3,1){\circle*{.3}}
\put(1,3){\circle*{.3}}
\put(3,3){\circle*{.3}}
\put(5,3){\circle*{.3}}
\put(3,5){\circle*{.3}}
\put(4,4){\circle*{.3}}
\put(5,7){\circle*{.3}}
\put(6,6){\circle*{.3}}
\put(7,5){\circle*{.3}}
\put(3,1){\line(-1,1)2}
\put(3,1){\line(0,1)4}
\put(3,1){\line(1,1)4}
\put(1,3){\line(1,1)4}
\put(4,4){\line(1,1)2}
\put(5,3){\line(-1,1)2}
\put(7,5){\line(-1,1)2}
\put(2.85,.3){$0$}
\put(.4,2.85){$d$}
\put(3.3,2.85){$e$}
\put(5.3,2.85){$a$}
\put(4.85,7.4){$1$}
\put(7.3,4.85){$c$}
\put(6.3,5.85){$b$}
\put(3.2,-.75){{\rm Fig.~6}}
\put(1,-1.75){{\rm Possible sublattice of $\mathbf L$}}
\end{picture}
\end{center}

\vspace*{10mm}

then $d$ and $e$ are different complements of $c$ the induced elements $(d\vee a)\wedge b$ and $(e\vee a)\wedge b$ of $\overline c_{ab}$ of which coincide and this element is a relative complement of $c$ in $[a,b]$.

Similarly, if $\mathbf L$ contains a sublattice of the form depicted in Fig.~7

\vspace*{-3mm}

\begin{center}
\setlength{\unitlength}{7mm}
\begin{picture}(8,8)
\put(3,1){\circle*{.3}}
\put(1,3){\circle*{.3}}
\put(2,2){\circle*{.3}}
\put(5,3){\circle*{.3}}
\put(3,5){\circle*{.3}}
\put(4,4){\circle*{.3}}
\put(5,7){\circle*{.3}}
\put(5,5){\circle*{.3}}
\put(7,5){\circle*{.3}}
\put(3,1){\line(-1,1)2}
\put(2,2){\line(1,1)2}
\put(3,1){\line(1,1)4}
\put(1,3){\line(1,1)4}
\put(5,3){\line(0,1)4}
\put(5,3){\line(-1,1)2}
\put(7,5){\line(-1,1)2}
\put(2.85,.3){$0$}
\put(.4,2.85){$c$}
\put(1.4,1.85){$a$}
\put(2.4,4.85){$b$}
\put(7.3,4.85){$e$}
\put(5.3,4.85){$d$}
\put(4.85,7.4){$1$}
\put(3.2,-.75){{\rm Fig.~7}}
\put(1,-1.75){{\rm Possible sublattice of $\mathbf L$}}
\end{picture}
\end{center}

\vspace*{10mm}

then $d$ and $e$ are different complements of $c$ the induced elements $(d\wedge b)\vee a$ and $(e\wedge b)\vee a$ of $\widehat c_{ab}$ of which coincide and this element is a relative complement of $c$ in $[a,b]$.

This is in accordance with the following fact. Consider the lattice $\mathbf L=(L,\vee,\wedge)$ from Example~\ref{ex1} and let $x,y,z\in L$ with $x<z<y$. Then any two distinct complements of $z$ induce two distinct elements of $\overline z_{xy}$, respectively $\widehat z_{xy}$, and $\mathbf L$ does not contain a sublattice of the described form.
\end{remark}








Authors' addresses:

Ivan Chajda \\
Palack\'y University Olomouc \\
Faculty of Science \\
Department of Algebra and Geometry \\
17.\ listopadu 12 \\
771 46 Olomouc \\
Czech Republic \\
ivan.chajda@upol.cz

Helmut L\"anger \\
TU Wien \\
Faculty of Mathematics and Geoinformation \\
Institute of Discrete Mathematics and Geometry \\
Wiedner Hauptstra\ss e 8-10 \\
1040 Vienna \\
Austria, and \\
Palack\'y University Olomouc \\
Faculty of Science \\
Department of Algebra and Geometry \\
17.\ listopadu 12 \\
771 46 Olomouc \\
Czech Republic \\
helmut.laenger@tuwien.ac.at
\end{document}